\theoremstyle{definition}
\newtheorem*{ack}{Acknowledgements}
\long\def\delete#1{}
\newcommand{\BC}{{\mathbb {C}}}
\newcommand{\BH}{{\mathbb {H}}}
\newcommand{\CK}{{\mathcal {K}}}
\newcommand{\CO}{{\mathcal {O}}}
\newcommand{\GL}{{\mathrm{GL}}}
\renewcommand{\Im}{{\mathrm{Im}}}
\newcommand{\Ind}{{\mathrm{Ind}}}
\newcommand{\Lie}{{\mathrm{Lie}}}
\newcommand{\pr}{{\mathrm{pr}}}
\newcommand{\PGL}{{\mathrm{PGL}}}
\newcommand{\Res}{{\mathrm{Res}}}
\newcommand{\SL}{{\mathrm{SL}}}
\newcommand{\SU}{{\mathrm{SU}}}
\newcommand{\vsp}{{\vspace{0.2in}}}
\newcommand{\sgn}{\operatorname{sgn}}
\newcommand{\g}{\mathfrak g}
\newcommand{\n}{\mathfrak n}
\renewcommand{\l}{\mathfrak l}
\newcommand{\s}{\mathfrak s}
\newcommand{\C}{\mathbb{C}}
\newcommand{\R}{\mathbb R}
\newcommand{\abs}[1]{\lvert#1\rvert}
\newcommand{\be}{\begin {equation}}
\newcommand{\ee}{\end {equation}}
\newcommand{\bee}{\begin {equation*}}
\newcommand{\eee}{\end {equation*}}
\theoremstyle{plain}
\newtheorem{theorem}{Theorem}[section]
\newtheorem{thm}[theorem]{Theorem}
\newtheorem{cor}[theorem]{Corollary}
\newtheorem{lem}[theorem]{Lemma}
\newtheorem{prpt}[theorem]{Proposition}
\theoremstyle{definition}
\newtheorem{rem}[theorem]{Remark}
\newtheorem{remark}[theorem]{Remark}
\def\bfmm{\mathbf{m}}
\def\Unip{{\mathrm{Unip}}}
\def\bfone{\mathbf{1}}
\def\bfrr{\mathbf{r}}
\def\bC{{\mathbb C}}
\def\bZ{{\mathbb Z}}
\def\ckG{{\check G}}
\def\cO{{\mathcal O}}
\def\ckcO{{\check \cO}}
\def\ckcOe{\ckcO_{\mathrm e}}
\def\ckcOo{\ckcO_{\mathrm o}}
\def\SU{\mathrm{SU}}
\def\U{\mathrm{U}}
\def\tU{\widetilde{\mathrm{U}}}
\def\SL{\mathrm{SL}}
\def\PGL{\mathrm{PGL}}
\def\GL{\mathrm{GL}}
\def\bR{\mathbb{R}}
\def\AND{\quad \text{and}\quad}
\def\PGL{{\mathrm{PGL}}}
\def\Coh{\mathrm{Coh}}
\def\Irr{\mathrm{Irr}}
\def\sfS{\mathsf{S}}
\def\sfW{\mathsf{W}}
\def\half{\frac{1}{2}}
\def\cC{\mathcal{C}}
\def\cK{\mathcal{K}}
\def\cCb{\mathcal{C}^{\mathrm b}}
\def\cCg{\mathcal{C}^{\mathrm g}}
\def\cCd{\mathcal{C}^{\mathrm d}}
\def\fhh{\mathfrak{h}}
\def\bfaa{\mathbf{a}}
\newcommand{\trivial}[2][]{\if\relax\detokenize{#1}\relax
{\color{red!50!black}\vspace{0em}{[} #2 {]}}\else
\ifx#1h
\ifcsname showtrivial\endcsname
{\color{red!20!black}\vspace{0em}{[}  #2 {]}}\fi
\else {\red Wrong argument!} \fi
\fi
}
\begin{document}

\title[Special unipotent representations]{Special unipotent representations of simple linear Lie groups of type $A$}

\author [D. Barbasch] {Dan Barbasch}
\address{Department of Mathematics\\
Cornell University\\
Ithaca, NY14853, USA}
\email{barbasch@math.cornell.edu}

\author [J.-J. Ma] {Jia-Jun Ma}
\address{School of Mathematical Sciences\\
  Xiamen University
  Xiamen, 361005, China}
  \address{Department of Mathematics, Xiamen University Malaysia campus, Sepang, Selangor Darul Ehsan, 43900,  Malaysia}
 \email{hoxide@xmu.edu.cn}

\author [B. Sun] {Binyong Sun}
\address{Institute for Advanced Study in Mathematics \& New Cornerstone Science Laboratory, Zhejiang University,  Hangzhou, 310058, China}\email{sunbinyong@zju.edu.cn}

\author [C.-B. Zhu] {Chen-Bo Zhu}
\address{Department of Mathematics\\
  National University of Singapore\\
  10 Lower Kent Ridge Road, Singapore 119076} \email{matzhucb@nus.edu.sg}

\subjclass[2020]{22E46, 22E47} \keywords{Special unipotent representations, unitary representations, coherent continuation, Weyl group representations}


\begin{abstract} Let $G$ be a special linear group over the real, the complex or the quaternion, or a special unitary group. In this note, we determine all special unipotent representations of $G$ in the sense of Arthur and Barbasch-Vogan, and show in particular that all of them are unitarizable.
\end{abstract}

\maketitle

\section{Introduction}

Let $G_\BC$ be a connected  reductive complex Lie group, and let $G$ be a real form of $G_\BC$, namely the fixed point group of an anti-holomorphic involutive automorphism $\sigma$ of $G_\BC$.  Denote by $\check G$ the Langlands dual group of $ G$ (which is a connected reductive complex Lie group), and by $\check \g$ the Lie algebra of $\check G$. We will work in the category of Casselman-Wallach representations \cite[Chapter 11]{Wa2}. For  a nilpotent $\check G$-orbit $\check \CO$ in $\check \g$, let $\mathrm{Unip}_{\check \CO}(G)$ be the set of isomorphism classes of special unipotent representations of $G$ attached to $\check \CO$. See \cite[Definition 5.23]{BVUni} for the definition of special unipotent, and \cite[Section 2]{BMSZ1} for a comprehensive discussion. The Arthur-Barbasch-Vogan conjecture (\cite[Section 4]{ArUni}, \cite[Introduction]{ABV}) asserts that all representations in $\mathrm{Unip}_{\check \CO}(G)$ are unitarizable. It is easy to see that the conjecture is reduced to the case when $G_\BC$ is simply connected, and the (real) Lie algebra $\mathrm{Lie}(G)$ of $G$ is simple.  In this case the Lie algebra of $G_\BC$, denoted by $\g$, is either simple or the product of two isomorphic simple Lie algebras.

In a series of two papers \cite{BMSZ1,BMSZ2}, the authors construct and classify special unipotent representations of real classical groups. We refer the reader to the introductory sections of \cite{BMSZ1,BMSZ2} for the list of real classical groups covered in the classifications. The main ingredients consist of Kazhdan-Lusztig theory (as in the work of Lusztig, Joseph, and Barbasch-Vogan), Howe's theory of theta lifting \cite{Howe79,Howe89} and Vogan's theory of associated cycles \cite{Vo89}.
As a direct consequence of the construction and the classification, the authors show that all special unipotent representations of real classical groups are unitarizable, as predicted by the Arthur-Barbasch-Vogan conjecture. For quasi-split classical groups, the unitarity is independently established in \cite{AAM,AM}, from the perspective of the endoscopic classification of representations \cite{ArEnd, Mok}.

In another paper \cite{BMSZ3}, the authors consider Spin groups and determine all genuine special unipotent representations. In particular, it is shown that all of them are unitarizable.

The results of \cite{BMSZ1,BMSZ2, BMSZ3} therefore establish the validity of the Arthur-Barbasch-Vogan conjecture for all $G$ when
$G_\BC$ is simply connected, and $\mathrm{Lie}(G)$ is simple, and $G$ is of type $B,C,D$. Note that for a complex classical group $G$ of type $B,C,D$, the unitarity assertion was due to Barbasch \cite{B89}, as part of his classification of the unitary duals for these groups.

In this note we are concerned with the case when $G$ is of type $A$, namely $G$ is one of the following: ($n\geq 2$)
\begin{equation}\label{gptypea}
   G=\SL_n(\R), \ \SL_{\frac{n}{2}}(\BH) \ (n\textrm{ is even}), \ \mathrm{SU}(p,q) \ (p+q=n),  \quad\textrm{ or }\quad \SL_n(\BC).
\end{equation}
The complex Lie algebra $\g$ is
\[
\g=\s\l_n(\BC), \ \s\l_n(\BC), \ \s\l_n(\BC), \quad\textrm{ or }\quad \s\l_n(\BC)\times {\s\l_n(\C)},
\]
and the complex Lie group $G_\BC$ is
\[
G_\BC=\SL_n(\BC), \ \SL_n(\BC),  \ \SL_{n}(\BC),  \quad\textrm{ or }\quad \SL_n(\BC)\times {\SL_n(\BC)}.
\]
Here the complexfication of  $\s\l_n(\bC)$ is identified with $\s\l_n(\BC)\times {\s\l_n(\C)}$ via the complexification map
\begin{equation}\label{complex}
{\s\l_n(\C)}\rightarrow \s\l_n(\BC)\times {\s\l_n(\C)}, \quad x\mapsto (x, \bar x),
\end{equation}
where $\bar x$ denotes the entry-wise complex conjugation of $x$. Similarly $\SL_n(\bC)$ is identified with  a real form of  $\SL_n(\BC)\times {\SL_n(\C)}$. Likewise we will identify  $\GL_n(\BC)$ with  a real form of $\GL_n(\BC)\times {\GL_n(\C)}$.

The list in \eqref{gptypea} exhausts all the groups $G$ with the following properties: $G_\BC$ is simply connected, $\Lie(G)$ is simple, and $\g$ is a product of simple Lie algebras of type $A$.

For a group $G$ in \eqref{gptypea}, define respectively a group
\begin{equation}\label{eq:G'}
 G':=\GL_n(\R), \ \GL_{\frac{n}{2}}(\BH), \ \widetilde{ \mathrm{U}}(p,q),\ \quad\textrm{ or }\quad \GL_n(\BC)
\end{equation}
so that $G$ is naturally identified with a subgroup of $G'$. Here $\tU(p,q)$ denotes the double cover of $\mathrm{U}(p,q)$ defined by the square root of the determinant character:
\[\tU(p,q) = \set{(u,t)\in \U(p,q)\times \U(1) | \det u = t^2},\]
and $\SU(p,q)$ is identified as the subgroup $\SU(p,q)\times \set{1}\subset \tU(p,q)$.  Also define respectively
\[
   G'_\BC:=\GL_n(\BC), \ \GL_{n}(\BC),  \ \widetilde \GL_n(\C),  \quad\textrm{ or }\quad \GL_n(\BC)\times {\GL_n(\BC)},
\]
where $\widetilde \GL_n(\C)$ denotes the double cover of $\GL_n(\C)$ defined likewise by the square root of the determinant character. Then $G'$ is a real form of $G'_\BC$, and $G_\BC$ is naturally identified with a subgroup of $G'_\BC$.

Write $\check G'$ for the Langlands dual group of $G'_\BC$, with the  Lie algebra $\check \g'$. Then $\check \g$ is a Lie subalgebra of $\check \g'$, and $\check \CO$ is also a nilpotent $\check G'$-orbit in $\check \g'$. We will determine all representations in $\mathrm{Unip}_{\check \CO}(G)$ via the restriction of representations in $\mathrm{Unip}_{\check \CO}(G')$. Note that all representations in $\mathrm{Unip}_{\check \CO}(G')$ have been classified, and are known to be unitarizable. See \cite{V.GL} for general linear groups and \cite[Section 11.2]{BMSZ2} for
$\widetilde{ \mathrm{U}}(p,q)$. See also \cite[Sections 2.4-2.5]{BMSZ1} for a review of their classifications.

 Recall that by Clifford theory, for every irreducible Casselman-Wallach representation $\pi$ of $\GL_n(\R)$,  if $\pi\otimes \sgn\cong \pi$, then $\pi|_{\SL_n(\R)}$ is the direct sum of two irreducible subrepresentations that are not isomorphic, to be denoted by $\pi|_{\SL_n(\R)}^+$ and $\pi|_{\SL_n(\R)}^-$.  Here $\sgn$ denotes the unique non-trivial quadratic character of $\GL_n(\R)$. If $\pi\otimes \sgn\ncong \pi$,  then $\pi|_{\SL_n(\R)}$ is irreducible.

\begin{thm}\label{main}
(a) Suppose that $G=\SL_n(\R)$ so that $G'=\GL_n(\R)$.
Define
\[\mathrm{Unip}_{\check \CO}(G')_0=
\{\pi\in \mathrm{Unip}_{\check \CO}(G')\,:\,  \pi\otimes \sgn\cong \pi \}
\]
and fix a decomposition
\[
\mathrm{Unip}_{\check \CO}(G')\setminus \mathrm{Unip}_{\check \CO}(G')_0 =\mathrm{Unip}_{\check \CO}(G')_+\sqcup \mathrm{Unip}_{\check \CO}(G')_-
\]
such that
\[
\mathrm{Unip}_{\check \CO}(G')_-=
\{\pi\otimes \sgn\,:\,  \pi\in \mathrm{Unip}_{\check \CO}(G')_+\}.
\]
Then
\[
\mathrm{Unip}_{\check \CO}(G)=\{\pi|_G\,:\, \pi\in \mathrm{Unip}_{\check \CO} (G')_+ \}\sqcup \bigsqcup_{\pi\in \mathrm{Unip}_{\check \CO} (G')_0 } \{\pi|_G^+, \pi|_G^-\}.
\]
\noindent
(b) Suppose that $G=\SL_{\frac{n}{2}}(\BH)$  ($n$ is even), $\mathrm{SU}(p,q)$ ($p+q=n$), or $\SL_n(\BC)$. Then for every $\pi\in \mathrm{Unip}_{\check \CO}(G')$, the representation $\pi|_G$ is irreducible and belongs to $\mathrm{Unip}_{\check \CO}(G)$. Moreover,  the map
\[
\mathrm{Unip}_{\check \CO}(G')\rightarrow \mathrm{Unip}_{\check \CO}(G), \quad \pi\mapsto \pi|_G
\]
is a bijection.

\end{thm}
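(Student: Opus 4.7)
The plan is to perform a Clifford--Mackey analysis of the inclusion $G \hookrightarrow G'$, exploiting that $G'/G$ is an abelian Lie group of very small topological type ($\R^\times$, $\R_{>0}$, $\U(1)$, or $\BC^\times$). The proof splits into three tasks: preservation of the unipotent condition under restriction, structure of the decomposition of $\pi'|_G$, and surjectivity.

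For preservation, decompose $\g' = \g \oplus \z'$ with $\z'$ the one-dimensional central complement, so $\UU(\g') \cong \UU(\g) \otimes \UU(\z')$. Since the semisimple parameter $\lambda_{\check \CO}$ lies in $\check \g \subset \check \g'$, the maximal primitive ideal of $\UU(\g')$ at infinitesimal character $\lambda_{\check \CO}$ intersects $\UU(\g)$ in the maximal primitive ideal of $\UU(\g)$ at $\lambda_{\check \CO}$. Hence every irreducible summand of $\pi'|_G$ belongs to $\mathrm{Unip}_{\check \CO}(G)$ whenever $\pi' \in \mathrm{Unip}_{\check \CO}(G')$.

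For the structure of $\pi'|_G$, set $S(\pi') := \{\chi \in \widehat{G'/G} : \pi' \otimes \chi \cong \pi'\}$. Any $\chi \in S(\pi')$ must satisfy $d\chi = 0$ (as $d\chi$ lies both in $(\z')^*$ and in the root lattice modulo $W$-shifts, whose intersection is trivial) and $\chi|_{Z(G')} = 1$ (to preserve the central character). A case-by-case check then shows: for $\GL_{n/2}(\BH)$ and $\GL_n(\BC)$ the quotient $G'/G$ is connected, so $d\chi = 0$ forces $\chi = 1$; for $\tU(p,q)$, the group $\widehat{\tU(p,q)/\SU(p,q)} \cong \BZ$ has all nontrivial characters of nonzero differential, again forcing $\chi = 1$; for $\GL_n(\R)$, $d\chi = 0$ restricts $\chi$ to $\{1, \sgn\}$, and the identity $\sgn(tI) = \sgn(t)^n$ shows $\sgn \in S(\pi')$ precisely when $n$ is even and $\pi' \in \mathrm{Unip}_{\check \CO}(G')_0$. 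The stabilizer in $G'$ of any irreducible summand of $\pi'|_G$ contains the scalar subgroup of $G'$, hence has index at most $2$; the Clifford--Mackey correspondence (applicable thanks to this finite index) then yields that $\pi'|_G$ is irreducible when $|S(\pi')| = 1$ and splits as $\pi'|_G^+ \oplus \pi'|_G^-$ into two inequivalent irreducibles when $|S(\pi')| = 2$.

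For surjectivity, given $\pi \in \mathrm{Unip}_{\check \CO}(G)$, the stabilizer $T := \{g' \in G' : g' \cdot \pi \cong \pi\}$ contains $G \cdot Z(G')$ and hence has index at most $2$ in $G'$. Since $T/G$ is abelian with vanishing continuous Schur multiplier, $\pi$ extends to $T$; normalizing the extension to have the correct infinitesimal character on $\z'$ and inducing up to $G'$ produces an irreducible $\pi' \in \Irr(G')$ containing $\pi$ in its restriction, and applying the preservation argument in reverse identifies $\pi' \in \mathrm{Unip}_{\check \CO}(G')$. Injectivity in (b) and the disjoint union structure in (a) follow directly from the description of $S(\pi')$ above, as any two elements of $\mathrm{Unip}_{\check \CO}(G')$ with overlapping $G$-restrictions differ by an element of $S(\pi')$. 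The main technical point to handle is the rigorous formulation of Clifford--Mackey for Casselman--Wallach modules in this infinite-quotient setting --- in particular, multiplicity-one in the decomposition of $\pi'|_G$ and the vanishing of the cocycle obstruction to extending $\pi$ from $G$ to $T$ --- both of which reduce to the smallness and near-contractibility of the quotients involved.
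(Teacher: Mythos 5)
Your approach is genuinely different from the paper's, and it is worth explaining why the paper does not argue this way. The paper makes one key structural observation: the infinitesimal character constraint forces the connected center $Z^\circ$ of $G'$ to act trivially on every $\pi'\in\mathrm{Unip}_{\check\CO}(G')$, and then exploits $G'=G\times Z^\circ$ (for $\SL_{n/2}(\BH)$), $G'=GZ^\circ$ (for $\SU(p,q)$, $\SL_n(\BC)$), and $\GL_n^+(\R)=G\times Z^\circ$ (for $\SL_n(\R)$). In the middle two cases this makes the restriction map trivially injective --- $\pi'$ is just the pullback of an irreducible representation of $G'/Z^\circ$ along the surjection $G\twoheadrightarrow G'/Z^\circ$, so no Clifford--Mackey analysis for infinite quotients is needed --- and surjectivity is then obtained from Proposition~\ref{main3}, the counting result proved with coherent continuation representations in Sections~\ref{sec:SL}--\ref{sec:SU}. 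Your plan replaces the counting argument with a direct construction (extend and induce), which if correct would be a substantial simplification.

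The gap, however, is precisely in the surjectivity step. For $G=\SU(p,q)$ and $G=\SL_n(\BC)$, the intersection $G\cap Z^\circ$ is a nontrivial finite cyclic group (e.g.\ $\mu_n\cdot I$ for $\SL_n(\BC)$), and a given $\pi\in\mathrm{Unip}_{\check\CO}(G)$ acts on it by some central character $\chi_0$. To extend $\pi$ to a representation $\pi'$ of $G'=GZ^\circ$ with infinitesimal character $\lambda'_{\check\CO}$, you need a character $\chi$ of $Z^\circ\cong\U(1)$ or $\BC^\times$ with $\chi|_{G\cap Z^\circ}=\chi_0$ \emph{and} $d\chi=0$ (since the nilpotent orbit constraint forces trivial infinitesimal character on $\z'$). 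Because $Z^\circ$ is connected, $d\chi=0$ forces $\chi=1$, and so such an extension exists if and only if $\chi_0$ is already trivial. Your phrase ``normalizing the extension to have the correct infinitesimal character on $\z'$'' is exactly where this is swept under the rug: twisting by characters of $T/G$ only shifts the $\z'$-infinitesimal character through a discrete set, so it cannot repair a nontrivial $\chi_0$. That every $\pi\in\mathrm{Unip}_{\check\CO}(G)$ has trivial central character on $G\cap Z^\circ$ is genuine content, and it is precisely what the counting of Proposition~\ref{main3} delivers indirectly (injectivity plus equal cardinalities). Without it, the surjectivity claim and the last sentence ``applying the preservation argument in reverse'' do not close. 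A secondary, flagged gap is the Clifford--Mackey machinery for Casselman--Wallach modules with non-discrete abelian quotient; you acknowledge it but do not supply it, whereas the paper's $Z^\circ$-reduction makes it unnecessary. I would suggest either proving the central-character triviality directly (which does not appear to follow formally from maximality of the annihilator), or falling back on the paper's counting argument for the surjectivity half.
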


\begin{rem}
Suppose that $G'=\GL_n(\BC)$. If $\check \CO$ has the form $\ckcO'\times {\ckcO'}$, where $\ckcO'$ is a nilpotent $\GL_n(\C)$-orbit in $\g\l_n(\C)$,
then $\mathrm{Unip}_{\ckcO}(G')$ is a singleton. Otherwise the set $\Unip_{\ckcO}(G')$ and $\Unip_{\ckcO}(G)$ are both empty. See \cite[Section 5]{BVUni} and \cite{Vo89}.
\end{rem}

\begin{cor}
Suppose that $G=\SL_n(\R)$, $\SL_{\frac{n}{2}}(\BH)$  ($n$ is even), $\mathrm{SU}(p,q)$ ($p+q=n$),  or $\SL_n(\BC)$. Then all representations in  $\mathrm{Unip}_{\check \CO}(G)$ are unitarizable.
\end{cor}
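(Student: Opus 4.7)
The plan is to deduce the corollary directly from Theorem \ref{main}, together with the already-established unitarizability of special unipotent representations on the enlarged groups $G'$ of \eqref{eq:G'}. The only work is to verify that the two operations used to build $\Unip_{\check\CO}(G)$ out of $\Unip_{\check\CO}(G')$ — namely restriction to a subgroup and passage to an irreducible direct summand — preserve unitarizability.

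First I would invoke Theorem \ref{main}. In case (b), for $G = \SL_{\frac n 2}(\BH)$, $\SU(p,q)$, or $\SL_n(\BC)$, every $\rho \in \Unip_{\check\CO}(G)$ equals $\pi|_G$ for some $\pi \in \Unip_{\check\CO}(G')$, and this restriction is already irreducible. In case (a), $G = \SL_n(\R)$ and each $\rho \in \Unip_{\check\CO}(G)$ is either of the form $\pi|_G$ for some $\pi \in \Unip_{\check\CO}(G')_+$, or one of the two inequivalent summands $\pi|_G^\pm$ arising in the decomposition $\pi|_G = \pi|_G^+ \oplus \pi|_G^-$ for $\pi \in \Unip_{\check\CO}(G')_0$. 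I would then appeal to the known unitarizability of each such $\pi$: for $G' = \GL_n(\R)$, $\GL_{\frac n 2}(\BH)$, and $\GL_n(\BC)$ this is Vogan's classification \cite{V.GL}, and for $G' = \widetilde{\mathrm{U}}(p,q)$ it is \cite[Section 11.2]{BMSZ2}. Equivalently, the underlying Harish-Chandra module of every $\pi \in \Unip_{\check\CO}(G')$ admits a $G'$-invariant positive-definite Hermitian form.

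Finally, unitarizability is plainly inherited under the two operations just mentioned. A $G'$-invariant positive-definite Hermitian form on $\pi$ is \emph{a fortiori} a $G$-invariant positive-definite Hermitian form on $\pi|_G$, so $\pi|_G$ is unitarizable. When in case (a) the restriction $\pi|_G$ further splits into the two inequivalent irreducibles $\pi|_G^+$ and $\pi|_G^-$, Schur's lemma forces the restricted Hermitian form to be the orthogonal direct sum of $G$-invariant Hermitian forms on the two summands, each of which is therefore positive-definite. Combining these observations with Theorem \ref{main} yields the corollary. There is no real obstacle here; the substantive mathematics is packaged entirely in Theorem \ref{main} and the cited unitarity results for $G'$, and the corollary is a formal consequence of those two inputs.
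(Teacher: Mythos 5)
Your proof is correct and is precisely the argument the paper intends: the corollary is stated as an immediate consequence of Theorem \ref{main} together with the known unitarizability of $\mathrm{Unip}_{\check \CO}(G')$ (cited to \cite{V.GL} and \cite[Section 11.2]{BMSZ2}), and the paper does not spell out the routine verification that restriction to a subgroup and passage to an irreducible summand preserve unitarizability, which you correctly supply.
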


\begin{remark} For $G=\SL_n(\BC)$, the result follows easily from those of \cite{V.GL} or \cite{B89}.
\end{remark}

Theorem \ref{main} gives the description of $\mathrm{Unip}_{\check \CO}(G)$
in terms of $\mathrm{Unip}_{\check \CO}(G')$. Our proof of Theorem \ref{main} is simple, and in the case of $G=\mathrm{SU}(p,q)$ ($p+q=n$), or $\SL_n(\C)$, it depends on the following counting result. Here the basic idea is that one can count irreducible representations from the coherent continuation representation of the integral Weyl group. The idea first appeared in \cite{BV.W} and is developed in its full generality in \cite{BMSZ1}.

\begin{prpt}\label{main3}
Suppose that $G=\mathrm{SU}(p,q)$ ($p+q=n$), or $\SL_n(\C)$. Then
\[
  \sharp \mathrm{Unip}_{\check \CO}(G') =\sharp \mathrm{Unip}_{\check \CO}(G).
  \]
\end{prpt}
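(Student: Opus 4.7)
The plan is to apply the counting formula via coherent continuation representations of the integral Weyl group, following \cite{BV.W} as systematized in \cite{BMSZ1}: for any group $H$ in our setting,
\[
\sharp\Unip_{\check\CO}(H) \;=\; \bigl[\,\Coh_{\lambda_{\check\CO}}(H)\,:\,\sigma_{\check\CO}\,\bigr]_{W(\lambda_{\check\CO})},
\]
where $\sigma_{\check\CO}$ is the Lusztig--Spaltenstein special representation of the integral Weyl group attached to $\check\CO$ and $\Coh_{\lambda_{\check\CO}}(H)$ is the coherent continuation representation on the Grothendieck group of irreducible Casselman--Wallach representations of $H$ with infinitesimal character $\lambda_{\check\CO}$. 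Since $G\subset G'$ share the complex Lie algebra $\g$, both $\sigma_{\check\CO}$ and $W(\lambda_{\check\CO})$ are common to the two groups, and the claim reduces to the equality of $\sigma_{\check\CO}$-multiplicities in the two coherent continuation representations.

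I would compare these via the restriction map $[\pi']\mapsto [\pi'|_G]$ from $\Coh_{\lambda_{\check\CO}}(G')$ to $\Coh_{\lambda_{\check\CO}}(G)$, which is $W(\lambda_{\check\CO})$-equivariant because translation functors commute with restriction through the abelian quotient $G'/G$ (namely $\U(1)$ in the $\tU(p,q)/\SU(p,q)$ case and $\C^\times$ in the $\GL_n(\C)/\SL_n(\C)$ case). By Clifford theory, this map is a bijection on irreducibles provided no nontrivial character $\chi$ of $G'/G$ preserves $\Unip_{\check\CO}(G')$ under twisting. The crux of the argument is to rule this out: the differential $d\chi$ is a nonzero element in the scalar (trace) direction of the Cartan $\check\h$, while the unipotent infinitesimal character $\lambda_{\check\CO}$ has vanishing scalar component since $\check\CO$ is a nilpotent orbit whose $\sl_2$-triple element $h$ is traceless. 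As the integral Weyl group (a subgroup of $S_n$, or $S_n\times S_n$ in the $\GL_n(\C)$ case) preserves the scalar direction, $\lambda_{\check\CO}$ and $\lambda_{\check\CO}+d\chi$ lie in distinct $W$-orbits, so $\pi'\otimes\chi\notin\Unip_{\check\CO}(G')$ for all nontrivial $\chi$. The restriction map is therefore a bijection between the two sets of irreducibles, proving the claimed equality of cardinalities.

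The main obstacle is the case $G'=\tU(p,q)$: one must carefully track how the generator $(u,t)\mapsto t$ of characters of $\tU(p,q)/\SU(p,q)\cong\U(1)$ descends, via the cover definition $\det u = t^2$, to a nonzero contribution in the scalar direction of $\check\h$. A direct computation confirms the differential indeed generates (up to the factor of $\frac{1}{2}$ coming from the cover) the scalar direction, completing the argument.
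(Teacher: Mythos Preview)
Your setup via the counting formula is correct, and it is true that the integral Weyl group $W(\lambda_{\check\CO})$ and the cell representation $\sigma_{\check\CO}$ coincide for $G$ and $G'$ (though note that $G$ and $G'$ do \emph{not} share a complexified Lie algebra: $\g=\s\l_n$ versus $\g'=\g\l_n$; what matters is that their Weyl groups and root systems agree). The gap is in the step where you invoke Clifford theory to conclude that restriction gives a \emph{bijection} between the coherent continuation representations. Your argument that a nontrivial character $\chi$ of $G'/G$ moves $\lambda_{\check\CO}$ off its $W$-orbit only shows that $\Res\colon\Coh_{\Lambda'}(\CK(G'))\to\Coh_{\Lambda}(\CK(G))$ is \emph{injective}; this is essentially the observation (already used in Section~\ref{sec2}) that $Z^\circ$ acts trivially on special unipotent representations. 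It does not address surjectivity, and surjectivity genuinely fails.

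Concretely, when $G=\SL_n(\BC)$ with $n_h=n_0$ (that is, $|\check\CO'_e|=|\check\CO'_o|$), or when $G=\SU(p,q)$ with $p=q=n_h=n_0$, the stabilizer $W_\Lambda$ of the coset $\Lambda$ in $W$ strictly contains the integral Weyl group $W(\Lambda)$, and $\Coh_\Lambda(\CK(G))$ is strictly larger than the image of $\Res$: there are irreducible coherent families for $G$ whose extensions to $G'$ cannot be arranged to have infinitesimal character in the particular coset $\Lambda'$. The paper computes this cokernel explicitly --- it is $\bigoplus_{\sigma_1,\sigma_2}(\sigma_1\otimes\sigma_2)\otimes(\sigma_2\otimes\sigma_1)$ in the $\SL_n(\BC)$ case, and two copies of $\cCd_p=\bigoplus_\sigma\sigma\otimes\sigma$ in the $\SU(p,q)$ case --- and then uses the combinatorial fact $\check\CO_e^{\mathrm t}\neq\check\CO_o^{\mathrm t}$ (a Young diagram with all rows even cannot have the same transpose as one with all rows odd) to check that $\sigma_{\check\CO}$ does not occur in this extra piece. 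That is the missing ingredient; without it, your argument reproves only the easy direction $\sharp\Unip_{\check\CO}(G')\leq\sharp\Unip_{\check\CO}(G)$.
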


 Here and henceforth $\sharp$ indicates the cardinality of a finite set.

\section{Proof of Theorem \ref{main}}\label{sec2}
Note that for every $\pi\in \mathrm{Unip}_{\check \CO}(G')$, $\pi|_G$ is either a representation in $\mathrm{Unip}_{\check \CO}(G)$, or the direct sum of two distinct  representations in $\mathrm{Unip}_{\check \CO}(G)$.

Write $Z^\circ$ for the connected component of the identity of the center of $G'$. The constraint on the infinitesimal character imposed by ``special unipotent'' implies that  $Z^\circ$ acts trivially on every representation in $\mathrm{Unip}_{\check \CO}(G')$.

If $G=\SL_{\frac{n}{2}}(\BH)$  ($n$ is even), then $G'=G \times Z^\circ$ is a direct product, and hence the map
\begin{equation}\label{rest0}
\mathrm{Unip}_{\check \CO}(G')\rightarrow \mathrm{Unip}_{\check \CO}(G), \quad \pi\mapsto \pi|_G
\end{equation}
is bijective. Part (b) of Theorem \ref{main} follows in this case.

If $G=\mathrm{SU}(p,q)$ ($p+q=n$),
or $\SL_n(\BC)$, then $G'=G Z^\circ$, and the map \eqref{rest0}
is well-defined and injective. Therefore part (b) of Theorem \ref{main} follows from Proposition \ref{main3}.

This completes the proof of part (b) of Theorem \ref{main} modulo the proof of Proposition \ref{main3}. We discuss part (a) below.

Assume $G=\SL_n(\R)$. Then $Z^\circ =\R^\times_+$ (the group of positive real numbers), and $\GL_n^+(\R)=Z^\circ \times G$, where $\GL_n^+(\R)$ is the connected component of the identity of $G'=\GL_n(\R)$. Define the set $\mathrm{Unip}_{\check \CO}(\GL_n^+(\R))$ of special unipotent representations of $\GL_n^+(\R)$ attached to $\check \CO$,
in the obvious way. The map
\[
\mathrm{Unip}_{\check \CO}(\GL_n^+(\R))\rightarrow \mathrm{Unip}_{\check \CO}(G), \quad \pi\mapsto \pi|_G
\]
is well-defined and bijective, and hence part (a) of Theorem \ref{main} follows by Clifford theory. \qed

We supplement Part (a) of Theorem \ref{main} with an explicit description of special unipotent representations of $\SL_n(\bR)$, by giving an explicit decomposition
\[
\mathrm{Unip}_{\check \CO}(\GL_n(\bR))=\mathrm{Unip}_{\check \CO}(\GL_n(\bR))_+\sqcup \mathrm{Unip}_{\check \CO}(\GL_n(\bR))_-\sqcup \mathrm{Unip}_{\check \CO}(\GL_n(\bR))_0.\]

We identify $\ckcO$ with the corresponding Young diagram \cite{CM}.
Write the lengths of the non-zero rows of $\ckcO$ as a multiset
\[
\bfrr(\ckcO) := \set{\underbrace{r_1, \cdots, r_1}_{m_1 \text{ terms}},
\underbrace{r_2, \cdots, r_2}_{m_2 \text{ terms}},
\cdots,
\underbrace{r_k, \cdots, r_k}_{m_k \text{ terms}}
}
\]
with $r_1>r_2>\cdots > r_k >0$. Put
\[
D_\ckcO  := \Set{(a_1,a_2, \cdots, a_k) \in \mathbb Z^k  | \,0\leq a_l \leq m_l, \text{ for $1\leq l \leq k$}}.\]
For a tuple $\bfaa:=(a_1,a_2,\cdots, a_k)\in D_\ckcO$,
define the normalized induced representation
\[
\pi_\bfaa := \Ind_{P}^{\GL_n(\bR)} \bigotimes_{l=1}^k (\bfone_{r_l}^{\otimes (m_l-a_l)}  \otimes\sgn_{r_l}^{\otimes a_l}).
\]
Here $P$ is the standard parabolic subgroup of $\GL_n(\bR)$ with Levi component
\[
\prod_{l=1}^{k}
\underbrace{\GL_{r_l}(\bR) \times \cdots \times \GL_{r_l}(\bR)}_{m_l \text{ terms}}, \]
and $\bfone_{r_l}$ (resp. $\sgn_{r_l}$) denotes the trivial (rep. sign) character of $\GL_{r_l}(\bR)$. Then
\[\mathrm{Unip}_{\check \CO}(\GL_n(\bR))=
\{\pi _\bfaa \,:\,  \bfaa \in D_\ckcO\}.\]
In words, the modules are induced irreducible from the parabolic subgroup with Levi component of the form above, and the representations are all choices of trivial or sign on each factor $\GL_{r_l}(\bR)$ of Levi component. See \cite{V.GL} and \cite[Example~27.5]{ABV}.

It is clear from the construction that $\pi_\bfaa \otimes \sgn \cong \pi_{\mathbf m-\mathbf a}$, where $\bfmm :=(m_1,\cdots, m_k)$.
In particular
$\pi_\bfaa \otimes \sgn \cong \pi_{\mathbf{a}}$ if and only if $\mathbf a = \frac{\mathbf m}{2}$. Therefore, we conclude that
\[\mathrm{Unip}_{\check \CO}(\GL_n(\bR))_0=
\{\pi _\bfaa \,:
2\bfaa = \bfmm
\}.
\]
This is a singleton if every $m_l$ ($1\leq l\leq k$) is  even, and is empty otherwise. We may also take
\[\mathrm{Unip}_{\check \CO}(\GL_n(\bR))_+=
\{\pi _\bfaa \,:\,  \bfaa \in D_\ckcO,  2\bfaa<\mathbf m \}.
\]
Here ``$ <$'' indicates the lexicographic order on $\bZ^k$.

\section{Proof of Proposition \ref{main3} for $\SL_n(\bC)$}\label{sec:SL}

\def\halfn{\frac{n}{2}}
\def\tX{\widetilde{X}}

\def\tpr{\widetilde{\pr}}
\def\trho{\widetilde{\rho}}
\def\tG{\widetilde{G}}
\def\tB{\widetilde{B}}
\def\tK{\widetilde{K}}
\def\tP{\widetilde{P}}
\def\barP{\overline{P}}

We will use the coherent continuation representation to count the special unipotent representations, as in \cite{BMSZ1}. We adopt its formulation as \cite[Sections 3 and 4] {BMSZ1}. The original references include \cite{Sch, Zu, SpVo, Vg}.

We have $G = \SL_n(\bC)$ and $G'= \GL_n(\bC)$. We make the following identifications:
\begin{itemize}
\item The dual $\fhh'^*$ of the abstract Cartan subalgebra $\fhh'$ of $\g'$ is identified with $\bC^n \times {\bC}^n$, and the dual $\fhh^*$ of the abstract Cartan subalgebra $\fhh$ of $\g$ is identified with the quotient $\bC^n/\bC\bfone_n\times {\bC}^n/{\bC}\bfone_n$, where $\bfone_n:= (1, 1, \cdots, 1)$.
\item The analytic weight lattice $X'\subset \fhh'^*$ of $G'_\bC$ is identified with $\bZ^n\times \bZ^n$ and the analytic weight lattice $X \subset \fhh^*$ of $G_\bC$ is identified with the quotient
$\bZ^n/\bZ \bfone_n \times \bZ^n/\bZ\bfone_n$.
\item The abstract Weyl group $W'$ of $G'_\bC$ and the abstract Weyl group $W$ of $G_\BC$ are naturally identified with $\sfS_{n}\times
\sfS_{n}$.
\end{itemize}
Here and henceforth, $\sfS_{n}$ denotes the symmetric group in $n$ letters.

Fix a $X'$-coset $\Lambda' \subset  \fhh'^*$.
We have the so-called integral Weyl group
\[
W'(\Lambda') := \set{w'\in W' |
\text{$w' \lambda' -\lambda'$ is in the root lattice for every $\lambda'\in \Lambda'$}
}.
\]

Let $\mathcal{K} (\GL_n(\bC))$ be the
Grothendieck group (with coefficients in $\bC$) of the category of Casselman-Wallach representations of  $\GL_n(\bC)$. Denote by $\Coh_{\Lambda'}(\CK(\GL_n(\bC)))$ the space of $\mathcal{K} (\GL_n(\bC))$-valued coherent families based on $\Lambda'$.
The space $\Coh_{\Lambda'}(\CK (\GL_n(\bC)))$
is naturally a $W'(\Lambda')$-module, which is called the coherent continuation representation. (See \cite[Sections 3 and 4]{BMSZ1} for more details.)
Similar notations will be used without further explanation.

For an element $\lambda'\in \fhh'^*$, let $[\lambda']\in \fhh^*$ denote its image  under the natural map $\fhh'^*\rightarrow \fhh^*$.  Likewise for an $X'$-coset $\Lambda'=\lambda '+X' \subset  \fhh'^*$, we have the image $\Lambda = [\lambda ']+X \subset  \fhh^*$.

The restriction of representations from $\GL_n(\bC)$ to $\SL_n(\bC)$
induces a homomorphism
\[
\mathcal{K} (\GL_n(\bC))\rightarrow \mathcal{K} (\SL_n(\bC)), \quad \Pi\mapsto \Pi|_{\SL_n(\bC)}.
\]
One checks that there  is a unique map
\[
    \Res\colon  \Coh_{\Lambda'}(\CK (\GL_n(\bC)))\rightarrow \Coh_{\Lambda}(\CK (\SL_n(\bC)))
\]
such that for each $\Psi\in \Coh_{\Lambda '}(\CK (\GL_n(\bC)))$,
\[
\Res(\Psi)([\lambda']) = \Psi(\lambda')|_{\SL_n(\bC)}
\quad \text{for all $\lambda'\in \Lambda'$.}
\]

Let $\n_h$ and $\n_0$ are two natural numbers such that $\n_h+\n_0 = n$.
Consider the $X'$-coset
\begin{equation}\label{eq:Lambda'}
\Lambda' = ( (\underbrace{\half, \cdots, \half}_{\n_h},
\underbrace{0,\cdots,0}_{\n_0})+ \bZ^n)
\times
( (\underbrace{\half, \cdots, \half}_{\n_h},
\underbrace{0,\cdots,0}_{\n_0})+ \bZ^n)
\subset \fhh'^*.
\end{equation}

The integral Weyl group $W(\Lambda')$ for $G'=\GL_n(\bC)$ and
the integral Weyl group $W(\Lambda)$ for $G=\SL_n(\bC)$ are naturally identified and are isomorphic to
\[
\sfS_{\n_h}\times \sfS_{\n_0}\times \sfS_{\n_h} \times \sfS_{\n_0}\subset \sfS_{n}\times
\sfS_{n}.
\]

We introduce some notations. For a finite group $E$, denote by $\Irr (E)$ the set of isomorphism classes of irreducible representations of $E$; $[\ : \ ]$ indicates the multiplicity of the first (irreducible) representation in the second representation.

It is well-known and straightforward to check that
\[
 \Coh_{\Lambda'}(\CK(\GL_n(\bC)))
 \cong \bigoplus_{\sigma_1\in \Irr(\sfS_{\n_h})}
     \bigoplus_{\sigma_2\in \Irr(\sfS_{\n_0})}\sigma_1\otimes \sigma_2\otimes \sigma_1\otimes \sigma_2.
\]
Namely it is isomorphic to the regular representation of $\sfS_{\n_h}\times \sfS_{\n_0}$ (see \cite[page 55]{BVUni}). The following lemma is then routine to check by using
Zhelobenko's classification for irreducible representations of complex groups (see for example \cite[Introduction]{BVUni}).

\begin{lem}\label{lem:cohSLR}
    The map
    \[
    \Res\colon  \Coh_{\Lambda'}(\CK(\GL_n(\bC)))\rightarrow \Coh_{\Lambda}(\CK(\SL_n(\bC)))
    \]
    is an injective homomorphism of $W(\Lambda)$-modules. Furthermore,
    $\Res$ is an isomorphism if $\n_h\neq \n_0$. If $\n_h =  \n_0$, we have that
   \[
   \Coh_{\Lambda}(\CK(\SL_n(\bC)))
    \cong  \Im(\Res) \oplus \bigoplus_{\sigma_1\in \Irr(\sfS_{\n_h})}
     \bigoplus_{\sigma_2\in \Irr(\sfS_{\n_0})}(\sigma_1\otimes \sigma_2)\otimes (\sigma_2\otimes \sigma_1),
   \]
where $\Im(\Res)$ denotes the image of the map $\Res$.
\end{lem}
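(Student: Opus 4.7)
The plan is to use Zhelobenko's classification of irreducible Casselman--Wallach representations of complex reductive groups (viewed as real Lie groups) to give explicit standard-module bases of both coherent continuation spaces and then analyze $\Res$ in these coordinates. Recall that for such a complex group $H_\bC$, irreducibles with infinitesimal character $(\chi_L,\chi_R)\in\fhh_\bC^*\times\fhh_\bC^*$ are indexed by diagonal $\sfS_n$-orbits on pairs $(\mu_L,\mu_R)\in(\sfS_n\chi_L)\times(\sfS_n\chi_R)$ with $\mu_L-\mu_R$ in the weight lattice of $H_\bC$; this lattice is $\bZ^n$ for $\GL_n(\bC)$ and $\bZ^n/\bZ\bfone_n$ for $\SL_n(\bC)$, so the $\SL_n$-integrality (lifted to $\bC^n$) asks only that the entries of $\mu_L-\mu_R$ all be congruent modulo $\bZ$. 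Choosing a sufficiently regular $\lambda'\in\Lambda'$ (with all entries of $\lambda'_L$ and $\lambda'_R$ distinct), this produces standard-module bases of $\Coh_{\Lambda'}(\CK(\GL_n(\bC)))$ and $\Coh_\Lambda(\CK(\SL_n(\bC)))$ on which the $W(\Lambda)$-action is the restriction of the natural $\sfS_n\times\sfS_n$-action on parameters; $\Res$ is manifestly $W(\Lambda)$-equivariant since restriction commutes with tensoring by finite-dimensional representations.

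\textbf{Injectivity of $\Res$.} First I would observe that if two irreducibles $\pi,\pi'$ of $\GL_n(\bC)$ share an infinitesimal character and have isomorphic restrictions to $\SL_n(\bC)$, then $\pi'\cong\pi\otimes\chi$ for some character $\chi$ of $\GL_n(\bC)/\SL_n(\bC)\cong\bC^\times$; requiring $\chi$ to preserve the infinitesimal character forces $\chi$ to be trivial. Hence restriction is injective on Grothendieck groups at each infinitesimal character, whence $\Res$ is an injective $W(\Lambda)$-module homomorphism.

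\textbf{Case analysis and identification of the complement.} The $\GL_n$-integrality forces the half-integer positions of $\mu_L$ and $\mu_R$ to coincide (\emph{Case~A}); the weaker $\SL_n$-integrality also admits a \emph{Case~B} in which at every index $i$ exactly one of $(\mu_L)_i,(\mu_R)_i$ is a half-integer, which is possible precisely when $\n_h=\n_0$. Counting diagonal $\sfS_n$-orbits, Case~A produces $\n_h!\,\n_0!$ classes on both sides and is exactly $\Im(\Res)$; when $\n_h=\n_0$, Case~B contributes an equal number forming a $W(\Lambda)$-stable complement, and when $\n_h\neq\n_0$ Case~B is empty, so $\Res$ is an isomorphism. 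Parametrising Case~A orbits by quadruples $(a_L,b_L,a_R,b_R)\in(\sfS_{\n_h}\times\sfS_{\n_0})^2$ encoding the arrangements of the half-integer and integer values in $\mu_L$ and $\mu_R$, the $W(\Lambda)$-action is the two-sided regular action of $\sfS_{\n_h}\times\sfS_{\n_0}$ on itself, reproducing $\bigoplus_{\sigma_1,\sigma_2}\sigma_1\otimes\sigma_2\otimes\sigma_1\otimes\sigma_2$. In Case~B, however, the half-integer positions of $\mu_L$ coincide with the \emph{integer} positions of $\mu_R$, so the two factors $\sfS_{\n_h}$ and $\sfS_{\n_0}$ of the right copy of $W(\Lambda)$ act on the $\mu_R$-parameters with swapped roles, yielding $\bigoplus(\sigma_1\otimes\sigma_2)\otimes(\sigma_2\otimes\sigma_1)$. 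The main obstacle I expect is to make this combinatorial bookkeeping of the $W(\Lambda)$-action on standard modules rigorous---in particular, verifying that coherent continuation of principal series is correctly implemented by the naive $\sfS_n\times\sfS_n$-action on Zhelobenko parameters, and that the Case~B swap is captured accurately.
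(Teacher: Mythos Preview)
Your proposal is correct and follows precisely the route the paper indicates: the paper does not give a detailed proof of this lemma, saying only that it ``is then routine to check by using Zhelobenko's classification for irreducible representations of complex groups.'' Your sketch supplies exactly those routine details---the standard-module bases via Zhelobenko parameters, the injectivity argument via central characters, and the Case~A/Case~B dichotomy with the swap of the $\sfS_{\n_h}$ and $\sfS_{\n_0}$ factors on the right---so there is nothing to compare beyond noting that you have fleshed out what the paper leaves implicit.
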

\def\sftt{\mathsf{t}}
\begin{remark}
Suppose $\n_h=\n_0$. The stabilizer $W_\Lambda$ of $\Lambda$ in $W =
\sfS_{n}\times
\sfS_{n}$ is generated by $W(\Lambda)$ together with $(\sftt, 1)$ and $(1,\sftt)$, where
$\sftt$ is the involution in $\sfS_n$ switching $i$ and $n-i+1$ for all $i\in \set{1,2,\cdots,n}$. The additional term in the coherent continuation representation $\Coh_{\Lambda}(\CK(\SL_n(\bC)))$ arises due to the fact that $W(\Lambda)$ is a proper subgroup of $W_\Lambda$.
\end{remark}

Given two Young diagrams $\imath$ and $\jmath$,  write $\imath\overset{r}{\sqcup}\jmath$ for the Young diagram whose multiset of nonzero row lengths equals the union of those of $\imath$ and $\jmath$. Also denote by $|\imath|$ the total size of $\imath$.

\vsp

\begin{proof}[Proof of Proposition~\ref{main3} for $\SL_n(\bC)$]
By the remark after Theorem~\ref{main}, it suffices to consider the case when $\ckcO = \ckcO' \times \ckcO'$, where $\ckcO'$ is a nilpotent $\GL_n(\C)$-orbit in $\g\l_n(\C)$.
Write the Young diagram decomposition $\ckcO' = \ckcO'_e\overset{r}{\sqcup}\ckcO'_o$, where all nonzero row lengths of $\ckcO'_e$ (resp. $\ckcO'_o$) are even (resp. odd).
The infinitesimal character for $G'$ determined by  $\ckcO$ (which is an algebraic character of the center of the universal algebra of $\g':=\Lie(G'_\BC$)), as in \cite[Section 5]{BVUni}, is represented by an element $\lambda'_{\check \CO}\in \Lambda'$
with $\n_h= \abs{\ckcO'_e}$ and $\n_0=\abs{\ckcO'_o}$ (see \eqref{eq:Lambda'}).
The infinitesimal character for $G$ determined by $\ckcO$ is represented by $[\lambda'_\ckcO] \in \Lambda$.
It is well-known (see for example \cite[Section 7.1]{BMSZ1}) that the Lusztig left cell attached to $\ckcO$ (\cite[Section 7]{BMSZ1}) is a singleton consisting of $\sigma_{\ckcO}:=((\ckcO'_e)^{\mathrm t}\otimes (\ckcO'_o)^{\mathrm t})\otimes ((\ckcO'_e)^{\mathrm t}\otimes (\ckcO'_o)^{\mathrm t})$. Here a superscript ``$\mathrm t$'' indicates the transpose of a Young diagram, and for any natural number $k$, we identify an element of $\Irr(\sfS_{k})$ with a Young diagram of total size $k$ via the Springer
correspondence, as in \cite[11.4]{Carter}. When $\n_h =  \n_0$, in view of the fact that $(\ckcO'_e)^{\mathrm t}\ne (\ckcO'_o)^{\mathrm t}$, we have that
\[[\sigma_{\ckcO}: \bigoplus_{\sigma_1\in \Irr(\sfS_{\n_h})}
     \bigoplus_{\sigma_2\in \Irr(\sfS_{\n_0})}(\sigma_1\otimes \sigma_2)\otimes (\sigma_2\otimes \sigma_1)]=0.
\]
Applying Lemma~\ref{lem:cohSLR}, we have that
\[
[\sigma_{\ckcO}:  \Coh_{\Lambda}(\SL_n(\bC))   ]  =
[\sigma_{\ckcO}:  \Coh_{\Lambda'}(\GL_n(\bC)) ].     \]
The result then follows by the counting equality of special unipotent representations in terms of the coherent continuation representation \cite[Corollary 2.2]{BMSZ1}.
\end{proof}

\section{Proof of Proposition \ref{main3} for $\SU(p,q)$}\label{sec:SU}

We adopt similar notations and terminologies as Section \ref{sec:SL}.

In this section, $G = \SU(p,q)$ and $G'= \tU(p,q)$, where $p+q=n$. The nilpotent orbits of $\ckG = \PGL_n(\bC)$ and $\check G'=\GL_{n}(\C)/\{\pm 1\}$ are both parameterized by partitions of $n$. We will identify the two sets.

Recall that $G_\bC = \SL_n(\bC)$, and $G'_\BC = \widetilde \GL_n(\C)$.
We make the following identifications:
\begin{itemize}
\item The dual $\fhh'^*$ of the abstract Cartan subalgebra $\fhh'$ of $\g'$ is identified with $\bC^n$, and the dual $\fhh^*$ of the abstract Cartan subalgebra $\fhh$ of $\g$ is identified with the quotient $\bC^n/\bC\bfone_n$.
\item
Write $X'\subset \fhh'^*$ for the lattice of weights occurring in finite dimensional representations of $G'_\bC$ which factor through algebraic representations of $\GL_n(\bC)$.
We identify $X'$ with $\bZ^n\subset \fhh'^*$ and the analytic weight lattice $X \subset \fhh^*$ of $G_\bC$ is then identified with the quotient
$\bZ^n/\bZ \bfone_n $.
\item The abstract Weyl group $W'$ of $G'_\bC$ and the abstract Weyl group $W$ of $G_\BC$ are naturally identified with $\sfS_{n}$.
\end{itemize}

Let $\n_h$ and $\n_0$ be two natural numbers such that $\n_h+\n_0 = n$.
Consider the $\bZ^n$-coset
\begin{equation}\label{eq:Lambda'.U}
\Lambda'=  (\underbrace{\half, \cdots \half}_{\n_h\text{-terms}},\underbrace{0, \cdots 0}_{\n_0\text{-terms}}) + \bZ^n \subset
\fhh'^*,
\end{equation}
and denote by $\Lambda $ the image of $\Lambda'$ under the natural map $\fhh'^*\rightarrow \fhh^*$.

The integral Weyl group $W(\Lambda')$ for $G'$ and
the integral Weyl group $W(\Lambda)$ for $G$ are naturally identified and are isomorphic to $\sfS_{\n_h}\times \sfS_{\n_0}$.

Let $\sfW_n$ be the subgroup of $\sfS_{2n}$ centralizing all transpositions of the form $(k,2n-k+1)$ ($1\leq k\leq n$).
The group $\sfW_n$ is isomorphic to the  Weyl group of type $B_n$ (or type $C_n$).
We  further introduce some notations for Weyl group representations.
For natural numbers $r$, $p$ and $q$,   define
\[
\begin{split}
\cCb_{r} &:=
\Ind_{\sfW_{r}}^{\sfS_{2r}} \bfone ,\\
\cCg_{p,q} &:= 
    \bigoplus_{0\leq k \leq \min(p,q)}\Ind_{\sfW_{k}\times \sfS_{p-k}\times \sfS_{q-k}}^{\sfS_{p+q}}\bfone\otimes \sgn\otimes \sgn, \AND \\
    \cCd_r &:= \Ind_{\sfS_r}^{\sfS_r\times \sfS_r} \bfone.
\end{split}
\]
Here $\bfone$ (resp. $\sgn$) denotes the trivial (resp. sign) character of an appropriate Weyl group. We also define
\[
\begin{split}
\cC^1_{p,q,r} & := \begin{cases}
    \cCb_{r/2}\otimes \cCg_{p-r/2,q-r/2} & \text{if $r$ is even and $\min(p,q)\geq r/2$};\\
    0 & \text{otherwise,}
\end{cases}
\\
\cC^2_{p,q,r} & := \begin{cases}
     \cCg_{p-r/2,q-r/2}\otimes \cCb_{r/2} & \text{if $r$ is even and $\min(p,q)\geq r/2$};\\
    0 & \text{otherwise.}
\end{cases}
\end{split}
\]

The following lemma can be deduced from a result of Barbasch-Vogan (see \cite[Theorem~4.5]{BMSZ1}) by direct computation, similar to \cite[Propositions 8.1 and 8.2]{BMSZ1}.

\begin{lem}\label{lem:cohSUpq}
As a $W(\Lambda)$-module,
   $\Coh_{\Lambda}(\CK (\SU(p,q)))$ is isomorphic to
\[
\begin{cases}
    \cC^1_{p,p,p} \oplus \cC^2_{p,p,p} \oplus
    \cCd_p\oplus \cCd_p, & \text{if $p=q=\n_h = \n_0$};\\
     \cC^1_{p,q,\n_h} \oplus \cC^2_{p,q,\n_0}  , & \text{otherwise}.
\end{cases}
\]
\end{lem}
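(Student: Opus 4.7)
My plan is to reduce the lemma to a direct application of the Barbasch-Vogan formula for coherent continuation representations (Theorem~4.5 of \cite{BMSZ1}), following the template of \cite[Propositions~8.1 and~8.2]{BMSZ1}. That formula writes $\Coh_\Lambda(\CK(G))$ as a direct sum, indexed by $K$-conjugacy classes of admissible pairs $(\h_s,\Gamma)$ consisting of a $\theta$-stable Cartan subalgebra $\h_s$ of $\g$ together with suitable cross-action and character data $\Gamma$, of representations induced from the stabilizer of $(\h_s,\Gamma)$ in $W(\Lambda)$ with an explicit sign character. For $G = \SU(p,q)$ at our choice of $\Lambda$, the integral Weyl group is $W(\Lambda) = \sfS_{\n_h} \times \sfS_{\n_0}$, and the admissible pairs are naturally parametrized by (i) the split rank $r$ of $\h_s$, which must be even; (ii) a choice of how the $r/2$ complex-conjugate pairs of roots are distributed between the half-integer block (of size $\n_h$) and the integer block (of size $\n_0$); and (iii) a choice of how the remaining coordinates split according to the $\pm$-signature of the Cartan involution.

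I would then partition the resulting sum according to whether the $r/2$ complex pairings lie in the half-integer block or in the integer block. The hyperoctahedral stabilizer $\sfW_{r/2}$ of the pairings contributes the tensor factor $\cCb_{r/2} = \Ind_{\sfW_{r/2}}^{\sfS_r} \bfone$. The residual data on the other block---a product stabilizer $\sfW_k \times \sfS_{p-k} \times \sfS_{q-k}$ with the character $\bfone \otimes \sgn \otimes \sgn$, summed over $k$ ranging over valid signature decompositions---assembles into the factor $\cCg_{p-r/2,q-r/2}$. Hence the two sub-sums produce exactly $\cC^1_{p,q,\n_h}$ and $\cC^2_{p,q,\n_0}$, with the parity of $r$ and the inequality $r/2 \le \min(p,q)$ appearing as admissibility conditions on the triple $(\h_s,\Gamma)$.

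In the degenerate case $p = q = \n_h = \n_0$, the same argument yields $\cC^1_{p,p,p} \oplus \cC^2_{p,p,p}$, but there are additional $K$-orbits coming from Cartan data whose complex pairings straddle both blocks diagonally and are fixed by the symmetry exchanging the two factors of $W(\Lambda) = \sfS_p \times \sfS_p$. Analogous to the situation explained in the remark following Lemma~\ref{lem:cohSLR}, these ``extra'' orbits (of which there are two, corresponding essentially to a holomorphic/antiholomorphic dichotomy) contribute precisely $\cCd_p \oplus \cCd_p$ with $\cCd_p = \Ind_{\sfS_p}^{\sfS_p \times \sfS_p} \bfone$. The main obstacle I expect is the combinatorial bookkeeping of the second paragraph: matching each Barbasch-Vogan summand, with its precise stabilizer character, to the correct tensor factor of $\cCb$ or $\cCg$, and in particular verifying that the characters $\bfone \otimes \sgn \otimes \sgn$ appear with exactly the multiplicities dictated by the defining formula for $\cCg_{p,q}$.
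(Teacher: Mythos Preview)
Your proposal is correct and follows exactly the approach indicated in the paper: the paper's proof is a one-sentence pointer stating that the lemma follows from the Barbasch--Vogan formula \cite[Theorem~4.5]{BMSZ1} by direct computation analogous to \cite[Propositions~8.1 and~8.2]{BMSZ1}, and your plan is precisely to carry out that computation along those lines. Your sketch in fact supplies more detail than the paper does, including a plausible account of the extra $\cCd_p\oplus\cCd_p$ summands in the degenerate case.
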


\vsp

\begin{proof}[Proof of Proposition~\ref{main3} for  $\SU(p,q)$]
Write the Young diagram decomposition $\ckcO = \ckcO_e\overset{r}{\sqcup} \ckcO_o$, where all nonzero row lengths of $\ckcO_e$ (resp. $\ckcO_o$) are even (resp. odd).
 The infinitesimal character for  $G'$  determined by $\ckcO$, as in \cite[Section 5]{BVUni}, is represented by an element $\lambda'_{\ckcO}\in \Lambda'$
with $\n_h= \abs{\ckcO_e}$ and $\n_0=\abs{\ckcO_o}$  (see \eqref{eq:Lambda'.U}).
The infinitesimal character for $G$ determined by $\ckcO$ is represented by $[\lambda'_\ckcO]\in \Lambda$.
The Lusztig left cell attached to $\ckcO$ (see \cite[Section 7]{BMSZ1}) is a singleton consisting of $\sigma_{\ckcO}:=(\ckcOe)^{\mathrm t} \otimes (\ckcOo)^{\mathrm t}$.
Since $\cCd_p = \bigoplus_{\sigma\in \Irr(\sfS_p)} \sigma\otimes \sigma$ and since
$\ckcOe^{\mathrm t} \neq \ckcOo^{\mathrm t}$,
we have
\[   [\sigma_{\ckcO} : \cCd_p] = 0, \quad \text{if $p=q=\n_h = \n_0$}.
\]

Let $\CK^{\mathrm{gen}}(\tU(p,q))$ denote the subgroup of $\CK(\tU(p,q))$ generated by irreducible genuine representations of $\tU(p,q)$.
By \cite[Propositon~7.3]{BMSZ1},
\begin{itemize}
\item if $p+q$ is odd, then
\[\Coh_{\Lambda'}(\CK(\U(p,q))) \cong \cC^1_{p,q,\n_h} \text{ and } \Coh_{\Lambda'}(\CK^{\mathrm{gen}}(\tU(p,q))) \cong \cC^2_{p,q,\n_0};\]
\item if $p+q$ is even, then
\[ \Coh_{\Lambda'}(\CK(\U(p,q))) \cong \cC^2_{p,q,\n_0} \text{ and } \Coh_{\Lambda'}(\CK^{\mathrm{gen}}(\tU(p,q))) \cong \cC^1_{p,q,\n_h}.\]
\end{itemize}

In view of the counting equality of special unipotent representations in terms of the coherent continuation representation \cite[Corollary 2.2]{BMSZ1}, the assertion of Proposition \ref{main3} follows by comparing
the formula of $\Coh_{\Lambda}(\CK(\SU(p,q)))$ in Lemma~\ref{lem:cohSUpq},
with 
\[
\begin{split}
\Coh_{\Lambda'}(\cK(\tU(p,q))) = & \Coh_{\Lambda'}(\CK(\U(p,q))) \oplus \Coh_{\Lambda'}(\CK^{\mathrm{gen}}(\tU(p,q))) \\
= &\cC^1_{p,q,\n_h} \oplus \cC^2_{p,q,\n_0}.
\end{split}
\]
This completes the proof.
\end{proof}

\vspace{5pt}

\begin{ack}
D. Barbasch is supported by NSF grant, Award Number 2000254. J.-J. Ma is supported by the National Natural Science Foundation of China (Grant No. 11701364 and Grant No. 11971305) and Xiamen University Malaysia Research Fund (Grant No. XMUMRF/2022-C9/IMAT/0019).
B. Sun is supported by  National Key R \& D Program of China (No. 2022YFA1005300 and 2020YFA0712600) and New Cornerstone Investigator Program.  C.-B. Zhu is supported by MOE AcRF Tier 1 grant R-146-000-314-114, and
Provost’s Chair grant E-146-000-052-001 in NUS.

C.-B. Zhu is grateful to Max Planck Institute for Mathematics in Bonn, for its warm hospitality and conducive work environment, where he spent the academic year 2022/2023 as a visiting scientist.
\end{ack}

\end{document}